\documentclass{article}
\newcommand{\paperTitle}{On the Duality between Feature and Sample Screening}

\newcommand{\paperAbstract}{
Feature and sample screening reduce the cost of machine learning by eliminating irrelevant features and noninformative samples, respectively. Although recognized as primal--dual counterparts, their relationship remains informal and model-dependent. Viewing screening and duality as transformations of objective functions, we introduce Fenchel--Rockafellar (FR) representations, a class of convex problems encompassing the Lasso and SVM that is closed under both transformations. We then prove that feature and sample screening form an equivariant pair: dualization followed by feature screening is equal to sample screening followed by dualization.
}

\newcommand{\paperKeywords}{Feature screening, Sample screening, Lasso, SVM, Fenchel--Rockafellar duality}

\newcommand{\authorLe}{Thu-Le Tran}
\newcommand{\affilLe}{Can Tho University, Vietnam}
\newcommand{\emailLe}{ttle@ctu.edu.vn}

\usepackage{xcolor}

\newcommand{\noteCLAUDE}[1]{}
\newcommand{\addCLAUDE}[1]{}
\usepackage{authblk}

\usepackage[left=3cm, right=3cm]{geometry}
\usepackage{amsmath, amssymb, amsthm}

\usepackage[colorlinks=true,
            linkcolor=blue,     % màu cho \ref, \eqref,...
            citecolor=red,      % màu cho \cite
            urlcolor=blue]{hyperref}   % luôn load trước
\usepackage[capitalise,nameinlink,noabbrev]{cleveref}

\newtheorem{theorem}{Theorem}[section]

\newtheorem{proposition}[theorem]{Proposition}
\newtheorem{corollary}[theorem]{Corollary}

\theoremstyle{definition}
\newtheorem{definition}[theorem]{Definition}

\newtheorem{example}[theorem]{Example}

\crefname{lemma}{Lemma}{Lemmas}
\Crefname{lemma}{Lemma}{Lemmas}
\crefname{proposition}{Proposition}{Propositions}
\Crefname{proposition}{Proposition}{Propositions}
\crefname{corollary}{Corollary}{Corollaries}
\Crefname{corollary}{Corollary}{Corollaries}
\crefname{consequence}{Consequence}{Consequences}
\Crefname{consequence}{Consequence}{Consequences}
\crefname{definition}{Definition}{Definitions}
\Crefname{definition}{Definition}{Definitions}
\crefname{assumption}{Assumption}{Assumptions}
\Crefname{assumption}{Assumption}{Assumptions}
\crefname{remark}{Remark}{Remarks}
\Crefname{remark}{Remark}{Remarks}
\crefname{example}{Example}{Examples}
\Crefname{example}{Example}{Examples}

\usepackage{tikz-cd}

\usepackage{float}

\usepackage[ruled,vlined,linesnumbered]{algorithm2e}
\SetKwInput{KwInput}{Input}
\SetKwInput{KwOutput}{Output}
\SetKwInput{KwInit}{Initialize}

\usepackage{pgfplots}
\pgfplotsset{compat=1.18}
\usetikzlibrary{calc}

\usepackage{amsmath}
\usepackage{xcolor}
\usepackage[square,authoryear]{natbib}
\title{\paperTitle}
\date{\today}
\author[1]{\authorLe}
\affil[1]{\affilLe \\ \texttt{\emailLe}}
\begin{document}
\maketitle
\begin{abstract}
    \paperAbstract
\end{abstract}

\textbf{Keywords:} \paperKeywords

\tableofcontents

\section{Introduction}
\label{sec:intro}

% \noteCLAUDE{Context: large-scale and screening.}\noteLE{Verified.}
Large-scale convex optimization is central to modern machine learning. In recent years, \emph{screening} methods have emerged as an effective technique for reducing problem size before or during optimization, thereby accelerating the solution of large-scale learning problems \citep{elghaoui2010}.

% \noteCLAUDE{Classification of screening}
% \noteLE{Verified.}
Screening methods have been studied from several complementary perspectives. According to whether they guarantee preservation of the optimal solution, they are broadly classified into safe screening methods \citep{elghaoui2010} and unsafe screening methods \citep{tibshirani2012strong,fan2018sure}. According to how screening is
performed, they may be static
\citep{elghaoui2010}, dynamic
\citep{bonnefoy2014dynamic}, or sequential
\citep{wang2013lasso}. Recent advances mainly focus on constructing
tighter safe regions, including ball
\citep{ndiaye2017gap,tran2025one}, dome
\citep{tran2022beyond}, and ellipsoid
\citep{dai2012ellipsoid} regions. The screening paradigm has also been generalized to related reduction mechanisms, including squeezing \citep{elvira2020safe}, relaxing \citep{guyard2022screen}, and peeling \citep{guyard2023safe}.

% \noteCLAUDE{SCOPE}\noteLE{Verified.}
This paper is concerned with a different axis of classification,
namely the screening target: \emph{feature screening} and \emph{sample screening}.
% Note that rather than studying the safe screening, here, we focus on general screening framework. 

% \noteCLAUDE{Historical development.}\noteLE{Verified.}
Feature screening was introduced for sparse learning models such as the
Lasso, where inactive features are identified and removed
\citep{elghaoui2010}. Sample screening was later developed by transferring
this idea to the dual of the soft-margin SVM, whose dual variables
correspond to training samples \citep{ogawa2013}. This primal--dual
connection also appears in simultaneous screening
\citep{shibagaki2016simultaneous} and in frameworks based on
Fenchel--Rockafellar duality
\citep{ndiaye2017gap,yamada2021dynamic,tran2025one}.

Across these developments, various existing general screening frameworks focus
primarily on deriving screening rules or safe regions, rather than on
formalizing the mathematical relationship between feature and sample screening. To the best of our knowledge, such a formal relationship is still missing. We therefore pose the following question, which is the main motivation of this paper:

\begin{center}
\emph{In what precise mathematical sense are feature screening and sample screening dual?}
\end{center}

In this paper, we address this question at the level of general screening, independently of the safety conditions required in safe screening.

To formalize their relationship, we view screening and FR duality as transformations on a common class of objective functions. Specifically,
we consider problems of the form
\[
    \min_{x\in\mathbb R^n} p(x)=f(Ax)+g(x)+a,
    \qquad A\in\mathbb R^{m\times n},\quad a\in\mathbb R,
\]
where $f$ and $g$ are \emph{separable} proper closed convex functions. We encode and identify the objective function $p$ with the quadruple $p=(f,g,A,a)$, called an \emph{FR representation} of $p$. We 
denote the class of all such representations by $\Gamma$.
% This class is
% closed under both screening and FR duality.
The scalar $a$ is included to
preserve closure under the transformations considered in this paper. The separability assumption of $f$ and $g$ is standard in machine learning and is essential for screening.

This class is particularly suited to supervised learning models: the columns of
$A$ represent features, while its rows represent samples. It encompasses
regression models such as the Lasso \citep{elghaoui2010}, Elastic Net
\citep{guyard2022screen}, and Huber regression \citep{chen2020safe};
classification models such as logistic regression
\citep{wang2014safe} and SVMs \citep{ogawa2013,nguyen2026gap}; and
optimal transport \citep{su2024safe}.

The corresponding FR dual problem, modified to account for the constant
$a$, is \citep{rockafellar2015convex}
\[
\max_{y \in \mathbb R^m}\;
-p^*(y), \qquad p^*(y) := g^*(-A^\top y) + f^*(y) - a.
\]
Here, \(f^*\) and \(g^*\) are the Fenchel conjugates of \(f\) and \(g\), while \(p^*\) is the FR dual of \(p\).

% Throughout the introduction, we write $S^F$ and $S^S$ for the feature- and sample-screening transformations, respectively, suppressing their
% index and certificate parameters for readability.

Feature screening, restated and generalized in this paper, assumes
that a block of variables satisfies \(x_{\bar I}=z\), where
\(I\sqcup\bar I=\{1,\ldots,n\}\), and transforms the function \(p\)
into\footnote{Throughout the introduction, we omit the subscript
parameters of \(S^F\) and \(S^S\) for readability.}
\[
    S^F(p)
    =
    f(A_Ix_I+A_{\bar I}z)
    +g_I(x_I)
    +a
    +g_{\bar I}(z).
\]
Classical feature screening is recovered by the special case
\(z=0\), see e.g., \citep{elghaoui2010}.

Sample screening, restated and generalized in this paper, assumes
\(s\in\partial f_{\bar J}(A_{\bar J}x)\), where
\(J\sqcup\bar J=\{1,\ldots,m\}\). By the Fenchel--Young equality, the loss function \(f\) at block \(\bar J\) can be replaced by an affine function,
\(
f_{\bar J}(A_{\bar J}x)
=
\langle s,A_{\bar J}x\rangle
-
f_{\bar J}^*(s).
\)
Substituting this into the objective, the function \(p\) becomes
\[
    S^S(p)
    =
    f_J(A_Jx)
    +g(x)
    +\langle A_{\bar J}^{\top}s,x\rangle
    +a
    -f_{\bar J}^*(s).
\]
Classical sample screening for the soft-margin SVM \citep{ogawa2013} is recovered by the special case
\(s=0\).

% \noteCLAUDE{Answer to Q2.}\noteLE{Verified.}
Now \(S^F\), \(S^S\), and \((\cdot)^*\) can be considered as transformations on
\(\Gamma\). 
This allows us to answer the question posed above: feature screening and sample screening are dual in the sense that they form an \emph{equivariant pair} under FR duality.
\[
    (S^F(p))^*
    =
    S^S(p^*).
\]
Specifically, feature screening on a primal representation is exactly sample screening on its dual representation. The duality between the
two screening transformations is expressed by the
commutative diagram
\[
\begin{array}{ccc}
p & \xrightarrow{S^F} & S^F(p)\\
{\scriptstyle *}\big\downarrow & & \big\downarrow{\scriptstyle *}\\
p^* & \xrightarrow{S^S} & S^S(p^*)
\end{array}
\]

% \noteCLAUDE{Contributions.}\noteLE{Verified.}
To summarize, the contributions of this paper are twofold. First, we formulate feature
screening and sample screening as transformations on the class of FR
representations. Second, we provide a precise mathematical formulation
of the duality between feature screening and sample screening via
equivariance under FR duality.

To achieve this, our approach is to show that both screening transformations
decompose into a translation followed by a restriction, and that these
primitive transformations are equivariant under FR duality. Within this
approach, reduction in problem size and the equivariance of
simultaneous screening follow as immediate consequences.

% \noteCLAUDE{Organization.}\noteLE{Verified.}
The rest of the paper is organized as follows. Section~\ref{sec:operators} introduces FR representations together with the primitive transformations including FR duality, translation, and restriction acting on them, providing the common mathematical environment for screening. Section~\ref{sec:screening} then defines feature screening and sample screening as transformations on this class of representations, proves that both decompose into a translation followed by a restriction, and establishes the duality between feature and sample screening.

\section{FR Representations and Equivariant Transformations}
\label{sec:operators}
This section does not study screening directly; it introduces FR representations together with the three primitive transformations that act on them, FR duality, translation, and restriction, and establishes that translation and restriction are equivariant under FR duality.

Throughout this section, functions take values in the extended reals \(\overline{\mathbb R}:=\mathbb R\cup\{+\infty\}\), so that a domain constraint can be encoded as \(+\infty\) outside the feasible set. For a finite set \(N\) and \(I\subseteq N\), \(\bar I:=N\setminus I\) denotes its complement, and  disjoint-union \(I\sqcup\bar I=N\) records that \(I\) and \(\bar I\) partition \(N\). This disjoint-union notation is used for both index sets fixed below: \(I\sqcup\bar I=\{1,\ldots,n\}\) and \(J\sqcup\bar J=\{1,\ldots,m\}\).

%%%%%%%%%%%%%%%%%%%%%%%%%%%%%%%%%%%%%%%%%%%%%%%%%%%%%%%%%%%%%%%

\subsection{FR Representations}

We consider the problems of the form
\begin{equation}
\min_{x \in \mathbb R^n}\;
p(x), \qquad p(x) = f(Ax)+g(x)+a.
\label{eq:FR-problem}
\end{equation}
where
\begin{equation}
f:\mathbb R^m\rightarrow\overline{\mathbb R},
\qquad
g:\mathbb R^n\rightarrow\overline{\mathbb R},
\qquad
A\in\mathbb R^{m\times n},
\qquad
a\in\mathbb R,
\label{eq:cond-of-FR-representation}
\end{equation}
with \(f\) and \(g\) closed, proper, and convex.
Here, at first glance, \(a\) is a redundant constant since it does not affect the minimizer of \(p\). However, it is important to retain the constant \(a\) because it is used as an absorbing constant in the following transformations.

% \noteLE{add ref}.
The problem in \eqref{eq:FR-problem} is fundamental to convex optimization and machine learning. It is the standard form underlying FR duality and provides a unified formulation for various screening frameworks \citep{tran2025one}.

\begin{definition}[FR representation]
    If \((f,g,A,a)\) satisfies \eqref{eq:cond-of-FR-representation}, then we say that it is admissible. In this case, we refer to it as an FR representation of function \(p\).
\end{definition}

In this paper, we identify the optimization problem \eqref{eq:FR-problem} with its FR representation \((f,g,A,a)\): the quadruple is not just a convenient way to write down \(p\), but is designed so that the class of FR representations stays \emph{closed} under the transformations studied in this paper.

For the function \(p\) in \eqref{eq:FR-problem}, we write \(\operatorname{size}(p):=(m,n)\) for the \emph{problem size} of $p$.
We define \(\Gamma_{m, n}\) to be the set of all functions \(p\) with problem size \((m, n)\), and \(\Gamma\) to be the set of all FR representations of any size.
If both \(f\) and \(g\) are separable, then \((f,g,A,a)\) is said to be a \emph{separable FR representation}. %Raw complexity and separability are essential and will be discussed in more detail below.

%%%%%%%%%%%%%%%%%%%%%%%%%%%%%%%%%%%%%%%%%%%%%%%%%%%%%%%%%%%%%%%

\subsection{FR Duality and Equivariant Transformations}

% main message: the dual of an FR representation is another FR representation, and duality is an involution on the class of FR representations.

We now recall FR duality and show that it defines a closed transformation on $\Gamma$.

For a function \(\varphi:\mathbb R^\ell\to\overline{\mathbb R}\), its Fenchel
conjugate is
\[
\varphi^*(y):=\sup_{x\in\mathbb R^\ell}\{\langle y,x\rangle-\varphi(x)\}.
\]
When \(\varphi\) is proper closed convex, then
\(\varphi^{**}=\varphi\) \citep{rockafellar2015convex}, a fact used repeatedly below.% \noteLE{add ref.}

The FR dual problem of \eqref{eq:FR-problem} is \citep{rockafellar2015convex}
\[
\max_{y \in \mathbb R^m}\;
-q(y), \qquad q(y) = g^*(-A^\top y) + f^*(y)-a.
\]
Note that in the classical FR duality, we do not have \(a\) and \(-a\) in \(p\) and \(q\).
Here the constant \(a\) is negated in the dual, so that the weak duality inequality \(p(x) + q(y)\geq 0\) remains valid for all \(x\in \mathbb R^n\) and \(y \in \mathbb R^m\).

Since the dual function \(q\) is itself an FR representation, with the problem size \((n, m)\), FR duality defines a transformation on the class of FR representations.

\begin{definition}[FR Duality]
The FR duality is a transformation \((\cdot)^*: \Gamma_{m, n} \to \Gamma_{n, m}\) such that the image of \(p=(f,g,A,a)\) is
\[
p^*
:=
(g^*,\,f^*,\,-A^\top,\,-a).
\]
\end{definition}

The meaning of ${}^*$ depends on the type of its argument: $\varphi^*$ denotes the Fenchel conjugate of a function $\varphi$, whereas $p^*$ denotes the FR dual of an FR representation $p$. We adopt this slight abuse to simplify notation.

% \noteCLAUDE{This result is used for simultaneous screening.} \noteLE{Verified.}
\begin{proposition}[FR duality involution]
\label{prop:involution}
For every FR representation \(p \in \Gamma\), we have \((p^*)^*=p\).
\end{proposition}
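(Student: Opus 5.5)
The plan is to compute \((p^*)^*\) directly from the definition of FR duality, applied twice, and then compare the four components of the resulting quadruple with those of \(p\). First we check that \(p^*\) is itself admissible, so that the second application of \((\cdot)^*\) is defined. Write \(p=(f,g,A,a)\in\Gamma_{m,n}\). Then \(p^*=(g^*,f^*,-A^\top,-a)\). Since \(f\) and \(g\) are proper, closed, and convex, their conjugates \(g^*:\mathbb R^n\to\overline{\mathbb R}\) and \(f^*:\mathbb R^m\to\overline{\mathbb R}\) are again proper, closed, and convex \citep{rockafellar2015convex}. Moreover, \(-A^\top\in\mathbb R^{n\times m}\) and \(-a\in\mathbb R\). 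Hence \(p^*\in\Gamma_{n,m}\) is admissible.

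Applying the definition once more, now to the quadruple \((g^*,f^*,-A^\top,-a)\), gives
\[
(p^*)^*=\bigl((f^*)^*,\,(g^*)^*,\,-(-A^\top)^\top,\,-(-a)\bigr).
\]
The last two entries simplify immediately: \(-(-A^\top)^\top=A\) and \(-(-a)=a\). For the first two entries we invoke the Fenchel--Moreau identity \(\varphi^{**}=\varphi\) recalled above, which holds for proper closed convex \(\varphi\). This gives \(f^{**}=f\) and \(g^{**}=g\). Therefore \((p^*)^*=(f,g,A,a)=p\), as an element of \(\Gamma_{m,n}\).

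The only step with real content is the admissibility check: we need the conjugates to be proper as well as closed and convex, since otherwise \(p^*\) would not lie in \(\Gamma\). Closedness and convexity of a conjugate hold automatically, because it is a supremum of affine functions. Properness of \(f^*\) follows from properness of \(f\) together with the existence of an affine minorant of a proper closed convex function, which is a standard fact. Beyond this, the argument is purely symbolic, because the paper identifies a problem with its quadruple and equality is therefore equality of the four components.
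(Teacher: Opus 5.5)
Your proposal is correct and matches the paper's proof: apply the definition of \((\cdot)^*\) twice, simplify the matrix and constant slots, and use biconjugation \(f^{**}=f\), \(g^{**}=g\) for the first two slots. Your explicit check that \(f^*\) and \(g^*\) are proper, closed, and convex, so that \(p^*\in\Gamma_{n,m}\), is something the paper takes for granted when it defines \((\cdot)^*:\Gamma_{m,n}\to\Gamma_{n,m}\), and it is a worthwhile addition.
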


\begin{proof}
Write \(p=(f,g,A,a)\), so \(p^*=(g^*,f^*,-A^\top,-a)\). Applying the
same rule again,
\[
(p^*)^*
=
\bigl((f^*)^*,\,(g^*)^*,\,-(-A^\top)^\top,\,-(-a)\bigr)
=
(f^{**},g^{**},A,a)
=
(f,g,A,a)
=
p,
\]
using biconjugation \(f^{**}=f\), \(g^{**}=g\), which hold since \(f\) and \(g\) are closed, proper, and convex \citep{rockafellar2015convex}.
\end{proof}

Then FR duality naturally induces a notion of duality between transformations, called \emph{equivariance}; this is the main structure to investigate in the remainder of the paper.

\begin{definition}[Equivariance under FR duality]
\label{def:equivariance}
Transformations \(F,G:\Gamma\to\Gamma\) are \emph{equivariant under FR duality} if
\[
(F(p))^*=G(p^*),
\qquad
\forall\,p\in\Gamma.
\]
\end{definition}

%%%%%%%%%%%%%%%%%%%%%%%%%%%%%%%%%%%%%%%%%%%%%%%%%%%%%%%%%%%%%%%

\subsection{Translation}
\label{subsec:translation}
The second transformation is translation. We first introduce three
primitive translations that combine into it.

\begin{definition}[Primitive translations]
For \(b\in\mathbb R^n\), \(c\in\mathbb R\), and \(d\in\mathbb R^n\),
define, for a function \(\varphi: \mathbb R^n \rightarrow \overline{\mathbb R}\),
\[
\text{domain translation:}
\quad
(T^{\mathrm d}_b \varphi)(y):=\varphi(y+b),
\]
\[
\text{value translation:}
\quad
(T^{\mathrm v}_c \varphi)(y):=\varphi(y)+c,
\]
\[
\text{slope translation:}
\quad
(T^{\mathrm s}_d\varphi)(y):=\varphi(y)+\langle d,y\rangle.
\]
\end{definition}

Here, the superscript specifies the type of translation, while the subscript specifies its parameter.
Geometrically, domain translation shifts the graph horizontally,
value translation shifts it vertically, and slope translation tilts
the graph by adding a linear function, thereby shifting every
subgradient by the fixed vector \(d\).

\begin{definition}[Translation parameter and its space]
\label{def:translation-parameter}
We call
\(
K_{m,n}:=\mathbb R^m\times\mathbb R\times\mathbb R^n,
\)
a \emph{translation parameter space}.
An element \(k=(b,c,d)\in K_{m,n}\) is called a \emph{translation
parameter}.
\end{definition}

Now, we can define a translation indexed by a translation parameter.

\begin{definition}[Translation]
Let \(p=(f,g,A,a)\in \Gamma_{m, n}\). The translation \(T_k\) indexed by \(k=(b,c,d) \in K_{m, n}\) on \(p\) is defined by \(T_k : \Gamma_{m, n} \rightarrow \Gamma_{m, n} \) such that
\[
T_k(p)
:=
(T^{\mathrm d}_bf,\;
T^{\mathrm s}_dg,\;
A,\;
a+c).
\]
\end{definition}

Note that the matrix \(A\) is untouched by translation: only the two functions
and the constant move. %This is what lets restriction and translation later be composed without the matrix itself being disturbed by which primitive acts first.
It is clear that the family \(\{T_k\}\) forms an Abelian translation action on the class of FR representations:
\[
T_{k_1+k_2}(p)
=
T_{k_1}(T_{k_2}(p)) = T_{k_2}(T_{k_1}(p)). 
\]
% \noteCLAUDE{Not currently cited by any result} \noteLE{verified. But kept because of its beauty. So keep it of this form, do not replace it by a proposition. The commutativity will be used in the next paper.}

Section~\ref{sec:screening} will apply the translation action by
choosing \(k\) as a function of the coordinates being screened; we
first establish this subsection's law, how the translation action
interacts with FR duality.

\begin{proposition}[Primitive translation equivariance]
\label{prop:duality-of-primitive-translations}
For every proper closed convex function \(\varphi: \mathbb{R}^n \to \overline{\mathbb{R}}\), \(b,d \in \mathbb{R}^n\), and \(c \in \mathbb{R}\), we have
\begin{enumerate}
    \item \((T^{\mathrm d}_b \varphi)^* = T^{\mathrm s}_{-b}(\varphi^*)\)
    \item \((T^{\mathrm v}_c \varphi)^* = T^{\mathrm v}_{-c}(\varphi^*)\)
    \item \((T^{\mathrm s}_d \varphi)^* = T^{\mathrm d}_{-d}(\varphi^*)\)
\end{enumerate}
\end{proposition}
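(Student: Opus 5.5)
Each of the three identities follows from the definition of the Fenchel conjugate, with a change of variables where needed. I will verify them pointwise at an arbitrary \(y\in\mathbb R^n\), working with extended-real suprema throughout. Properness, closedness and convexity of \(\varphi\) are not really needed for the identities themselves. They only ensure that both sides are again proper closed convex, so the results can be used inside FR representations.

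For item 1, I would write
\[
(T^{\mathrm d}_b\varphi)^*(y)=\sup_{x}\{\langle y,x\rangle-\varphi(x+b)\}
\]
and substitute \(u=x+b\). This substitution is a bijection of \(\mathbb R^n\), so the supremum is unchanged. The expression becomes \(\sup_u\{\langle y,u\rangle-\varphi(u)\}-\langle y,b\rangle=\varphi^*(y)+\langle -b,y\rangle\), which is \(T^{\mathrm s}_{-b}(\varphi^*)(y)\). One small point needs care: pulling the finite term \(-\langle y,b\rangle\) outside the supremum is valid even when the supremum equals \(+\infty\).

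For item 2, the constant \(c\) is finite, so it passes directly through the supremum:
\[
\sup_x\{\langle y,x\rangle-\varphi(x)-c\}=\varphi^*(y)-c .
\]
For item 3, I would combine the linear terms to get
\[
\sup_x\{\langle y,x\rangle-\varphi(x)-\langle d,x\rangle\}=\sup_x\{\langle y-d,x\rangle-\varphi(x)\}=\varphi^*(y-d),
\]
which is \(T^{\mathrm d}_{-d}(\varphi^*)(y)\).

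As a cross-check, item 3 can also be derived from item 1 using biconjugation, which the paper already uses in \cref{prop:involution}. Apply item 1 to \(\psi=\varphi^*\), which is proper closed convex. This gives \((T^{\mathrm d}_{-d}\varphi^*)^*=T^{\mathrm s}_{d}\varphi^{**}=T^{\mathrm s}_d\varphi\). Conjugating both sides and using that \(T^{\mathrm d}_{-d}\varphi^*\) is proper closed convex then yields item 3. Either route works; the direct computation is shorter.

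None of the steps is a genuine obstacle. The only thing to get right is the sign conventions: the subscripts must come out as \(-b\), \(-c\) and \(-d\). In item 3 in particular, \(\varphi^*(y-d)\) has to be read as domain translation by \(-d\).
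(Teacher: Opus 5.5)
Your proof is correct and takes the same approach as the paper, which simply says all three identities follow directly from the definition of the Fenchel conjugate; your pointwise computations, with the substitution $u=x+b$ in item 1 and the signs $-b$, $-c$, $-d$, spell out exactly that verification. Your observation that properness, closedness and convexity are not needed for the identities is also right, and the biconjugation cross-check for item 3 is valid but unnecessary.
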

\begin{proof}
    The identities follow directly from the definition of the Fenchel conjugate.
\end{proof}
% \noteCLAUDE{Used in
% in the proof of Translation equivariance below.}\noteLE{Verified.}

We define the \emph{dual parameter} of \(k\) by
\(k^*=(-d,-c,-b)\in K_{n, m}\).
Intuitively, $k^*$ is obtained from $k$ by swapping the domain and slope translation parameters, and negating all three parameters. 
Note that $k^*\neq -k$ and \((k^*)^*=k\). 
Here, one recalls that the meaning of ${}^*$ is determined by the type of its argument.

\begin{proposition}[Translation equivariance]
\label{thm:fenchel-equivariance}
Let \(p\in \Gamma_{m, n}\) be an FR representation and let \(k=(b,c,d)\in K_{m, n}\) be a translation
parameter. Then \(T_k\) and \(T_{k^*}\) are equivariant under FR
duality:
\[
(T_k(p))^*
=
T_{k^*}(p^*).
\]
\end{proposition}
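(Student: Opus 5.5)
Both sides of the identity will be computed directly as quadruples and compared componentwise, with \cref{prop:duality-of-primitive-translations} doing all the analytic work. Write \(p=(f,g,A,a)\in\Gamma_{m,n}\) and \(k=(b,c,d)\in K_{m,n}\), with \(b\in\mathbb R^m\) and \(d\in\mathbb R^n\). Then \(k^*=(-d,-c,-b)\in K_{n,m}\) and \(p^*=(g^*,f^*,-A^\top,-a)\in\Gamma_{n,m}\), so \(T_{k^*}\) acts on \(p^*\) with sizes compatible.

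\emph{Left-hand side.} By definition, \(T_k(p)=(T^{\mathrm d}_b f,\,T^{\mathrm s}_d g,\,A,\,a+c)\). Before dualizing, I check that this quadruple is admissible: a domain translation or a linear tilt of a proper closed convex function is again proper closed convex. Hence \(T_k(p)\in\Gamma_{m,n}\) and the FR duality rule applies:
\[
(T_k(p))^*=\bigl((T^{\mathrm s}_d g)^*,\,(T^{\mathrm d}_b f)^*,\,-A^\top,\,-a-c\bigr).
\]
Items 1 and 3 of \cref{prop:duality-of-primitive-translations} then give \((T^{\mathrm s}_d g)^*=T^{\mathrm d}_{-d}(g^*)\) and \((T^{\mathrm d}_b f)^*=T^{\mathrm s}_{-b}(f^*)\).

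\emph{Right-hand side.} Apply the definition of translation to \(p^*\) with parameter \(k^*=(-d,-c,-b)\). The domain translation by \(-d\) acts on the first function \(g^*\), the slope translation by \(-b\) acts on the second function \(f^*\), and \(-c\) is added to the constant:
\[
T_{k^*}(p^*)=\bigl(T^{\mathrm d}_{-d}(g^*),\,T^{\mathrm s}_{-b}(f^*),\,-A^\top,\,-a-c\bigr).
\]
This agrees componentwise with the expression obtained for \((T_k(p))^*\), which proves the claim.

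There is no genuine obstacle. The only care needed is bookkeeping. First, the dimensions must match: \(b\in\mathbb R^m\) pairs with \(f\) and \(d\in\mathbb R^n\) with \(g\), and these swap roles in \(k^*\). Second, the constant term must be checked: \(-(a+c)=-a-c\), which is consistent with item 2 of the proposition for value translations. Third, admissibility of \(T_k(p)\) must be verified before invoking the duality map.
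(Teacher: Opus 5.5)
Your proposal is correct and matches the paper's proof: both expand \((T_k(p))^*\) with items 1 and 3 of Proposition~\ref{prop:duality-of-primitive-translations} and compare it componentwise with \(T_{k^*}(p^*)\). Your explicit check that \(T_k(p)\) is admissible is a small step the paper leaves implicit, and it does not change the argument.
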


\begin{proof}
Write \(p=(f,g,A,a)\), so \(T_k(p)=(T^{\mathrm d}_bf,\,T^{\mathrm
s}_dg,\,A,\,a+c)\). Applying the FR dual and  Proposition~\ref{prop:duality-of-primitive-translations},
\[
(T_k(p))^*
=
\big(
(T^{\mathrm s}_dg)^*,\;
(T^{\mathrm d}_bf)^*,\;
-A^\top,\;
-(a+c)
\big)
=
\big(
T^{\mathrm d}_{-d}(g^*),\;
T^{\mathrm s}_{-b}(f^*),\;
-A^\top,\;
-a-c
\big).
\]

On the other side, we have \(p^*=(g^*,f^*,-A^\top,-a)\), and \(k^*=(-d,-c,-b)\),
so \(T_{k^*}(p^*)\). Thus, % applies a domain translation by \(-d\) to \(p^*\)'s first slot, a slope translation by \(-b\) to its second slot, and adds \(-c\) to its constant:
\[
T_{k^*}(p^*)
=
\big(
T^{\mathrm d}_{-d}(g^*),\;
T^{\mathrm s}_{-b}(f^*),\;
-A^\top,\;
-a-c
\big).
\]
Every term matches \((T_k(p))^*\), which proves the result.
\end{proof}

\noteCLAUDE{Used in  Section~\ref{sec:screening}'s Feature--sample
duality theorem.}
% \noteLE{Verified.}
%%%%%%%%%%%%%%%%%%%%%%%%%%%%%%%%%%%%%%%%%%%%%%%%%%%%%%%%%%%%%%%

%%%%%%%%%%%%%%%%%%%%%%%%%%%%%%%%%%%%%%%%%%%%%%%%%%%%%%%%%%%%%%%

\subsection{Restriction}

The third and final transformation is restriction. Throughout this
subsection and Section~\ref{sec:screening}, we additionally assume that \(f\) and \(g\) are separable, \(f(y)=\sum_{j=1}^m f_j(y_j)\)
and \(g(x)=\sum_{i=1}^n g_i(x_i)\), with each \(f_j\) and \(g_i\) closed, proper, and convex on \(\mathbb{R}\).

Let \(I\subseteq\{1,\ldots,n\}\) and \(J\subseteq\{1,\ldots,m\}\). Write
\(x_I:=(x_i)_{i\in I}\) and \(g_I(x_I):=\sum_{i\in I}g_i(x_i)\); define
\(f_J\) symmetrically.
For a matrix \(A\in\mathbb R^{m\times n}\), write \(A_I:=A_{:,I}\) for
the submatrix of columns indexed by \(I\), and \(A_J:=A_{J,:}\) for the
submatrix of rows indexed by \(J\). Combining both,
\(A_{J,I}:=(A_J)_I=(A_I)_J\) denotes the submatrix with rows in \(J\)
and columns in \(I\).

\begin{definition}[Feature and sample restriction]
Let \(I\sqcup\bar I=\{1,\ldots,n\}\) and \(J\sqcup\bar J=\{1,\ldots,m\}\). The feature restriction operator \(R^{F}_I:\Gamma_{m,n}\to\Gamma_{m,|I|}\) is defined by
\[
R^{F}_I(f,g,A,a)
:=
(f,g_I,A_I,a).
\]
Symmetrically, the sample restriction operator \(R^{S}_J:\Gamma_{m,n}\to\Gamma_{|J|,n}\) is defined by
\[
R^{S}_J(f,g,A,a)
:=
(f_J,g,A_J,a).
\]
\end{definition}

The two restriction operators commute, \(R^{S}_J\bigl(R^{F}_I(p)\bigr)=R^{F}_I\bigl(R^{S}_J(p)\bigr)\):
feature restriction touches only \(g\) and the columns of \(A\), sample
restriction touches only \(f\) and the rows of \(A\), so the two act on
disjoint parts of \(p\). That same separation is what lets them
exchange cleanly under FR duality.

\begin{proposition}[Restriction equivariance]
\label{prop:restriction-equivariance}
For every \(p=(f,g,A,a)\in\Gamma\), every \(I\subseteq\{1,\ldots,n\}\),
and every \(J\subseteq\{1,\ldots,m\}\), \(R^F_I\) and \(R^S_I\) are
equivariant under FR duality (Definition~\ref{def:equivariance}), and
symmetrically for \(R^S_J\) and \(R^F_J\):
\[
\bigl(R^{F}_I(p)\bigr)^*=R^{S}_I(p^*),
\qquad
\bigl(R^{S}_J(p)\bigr)^*=R^{F}_J(p^*).
\]
\end{proposition}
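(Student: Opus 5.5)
The plan is to compute both sides componentwise, as in the proof of Proposition~\ref{thm:fenchel-equivariance}. The only non-formal input is that Fenchel conjugation commutes with restriction of a separable function: since \(g(x)=\sum_i g_i(x_i)\), the supremum defining \(g^*\) splits coordinatewise, so \(g^*(u)=\sum_i g_i^*(u_i)\). Hence \(g^*\) is separable, and its \(I\)-block is \((g^*)_I=\sum_{i\in I}g_i^*=(g_I)^*\). The same holds for \(f\), giving \((f^*)_J=(f_J)^*\). I will record this first as a one-line observation; it also shows that \(p^*\) is again a separable FR representation, so the restriction operators can legitimately be applied to it.

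For the first identity, write \(p=(f,g,A,a)\in\Gamma_{m,n}\). Then
\[
R^F_I(p)=(f,g_I,A_I,a),
\qquad
\bigl(R^F_I(p)\bigr)^*=\bigl((g_I)^*,\,f^*,\,-(A_I)^\top,\,-a\bigr).
\]
On the other side, \(p^*=(g^*,f^*,-A^\top,-a)\in\Gamma_{n,m}\). Its sample index set is \(\{1,\ldots,n\}\), so \(I\) is a legitimate row index set for \(p^*\), and
\[
R^S_I(p^*)=\bigl((g^*)_I,\,f^*,\,(-A^\top)_I,\,-a\bigr).
\]
Two facts make the slots agree. First, the observation above gives \((g^*)_I=(g_I)^*\). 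Second, the rows of \(A^\top\) indexed by \(I\) are the transposes of the columns of \(A\) indexed by \(I\), so \((-A^\top)_{I,:}=-(A_{:,I})^\top\). The second and fourth slots agree trivially. Hence the two sides are equal.

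For the second identity there are two routes. The direct one repeats the computation with rows and columns swapped, using \((-A^\top)_{:,J}=-(A_{J,:})^\top\) and \((f^*)_J=(f_J)^*\). Alternatively, it follows from the first identity and Proposition~\ref{prop:involution}. Apply the first identity to \(p^*\) with index set \(J\), which gives \((R^F_J(p^*))^*=R^S_J(p^{**})=R^S_J(p)\). Now dualize both sides and use the involution again to get \(R^F_J(p^*)=(R^S_J(p))^*\). This second route requires \(R^F_J(p^*)\) to be an FR representation, which holds because restriction preserves admissibility.

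The only real obstacle is bookkeeping. One must check that the subscript notation \(A_I\) (columns) versus \(A_J\) (rows) is interpreted correctly after the transposition in \(p^*\). One must also verify that the separable conjugate decomposes with each \(g_i^*\) proper, closed and convex. This holds because each \(g_i\) is proper, closed and convex on \(\mathbb R\), so its conjugate is as well. Everything else is slot-by-slot matching.
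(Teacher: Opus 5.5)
Your proposal is correct and matches the paper's proof: a slot-by-slot comparison using \((g_I)^*=(g^*)_I\) from separability, and the fact that rows \(I\) of \(-A^\top\) are \(-A_I^\top\), with the second identity obtained as the mirror computation. Your alternative derivation of the second identity from the first via Proposition~\ref{prop:involution} is also valid, since \(p^*\) and \(R^F_J(p^*)\) are again separable FR representations.
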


\begin{proof}
Write \(p^*=(g^*,f^*,-A^\top,-a)\). Applying the FR dual termwise to
\(R^F_I(p)=(f,g_I,A_I,a)\),
\[
\bigl(R^F_I(p)\bigr)^*
=
\bigl((g_I)^*,\;f^*,\;-A_I^\top,\;-a\bigr).
\]
By separability of \(g\), \((g_I)^*=(g^*)_I\). Rows \(I\) of
\(-A^\top\) equal \(-A_I^\top\), since rows of \(A^\top\) are columns of
\(A\) transposed. Hence
\[
\bigl(R^F_I(p)\bigr)^*
=
\bigl((g^*)_I,\;f^*,\;-A_I^\top,\;-a\bigr)
=
R^S_I(p^*),
\]
the last equality by the definition of sample restriction applied to
\(p^*=(g^*,f^*,-A^\top,-a)\). The second identity is the mirror
computation, restricting \(R^S_J(p)=(f_J,g,A_J,a)\) instead, and using
\((f_J)^*=(f^*)_J\) by the same separability argument.
\end{proof}

% \noteCLAUDE{Restriction equivariance is used in the proof of the Feature--sample duality theorem in Section~\ref{sec:screening}.} \noteLE{Verified.}

With all three transformations in place, each carrying its own law of
equivariance under FR duality, Section~\ref{sec:screening} turns to
screening itself: not a fourth primitive, but a transformation built by
combining translation and restriction.

\section{Duality of Feature and Sample Screening}
\label{sec:screening}

This section shows that feature screening and sample screening form an
equivariant pair under FR duality. We define both as transformations on
FR representations, show that each decomposes into a translation
followed by a restriction, and use this decomposition to prove the
main duality identity. As a by-product, we obtain a reduction in
problem size and a characterization of simultaneous screening.

\subsection{Feature and Sample Screening as Transformations}

This subsection defines feature screening and sample screening as
transformations on \(\Gamma\). Recall that an FR
representation \(p=(f,g,A,a)\) encodes the objective function
\[
p(x)=f(Ax)+g(x)+a.
\]

In the literature, feature screening typically eliminates a block of primal variables. We state the
assumption slightly more generally, allowing the eliminated block to
sit at an arbitrary fixed point rather than requiring it to be zero.
Let \(I\sqcup\bar I=\{1,\ldots,n\}\) and \(z\in\mathbb R^{\bar I}\) be a known vector, and
suppose that
\[
x_{\bar I}=z.
\]
Writing \(x=(x_I,x_{\bar I})\), \(A=(A_I,A_{\bar I})\), and
\(g(x)=g_I(x_I)+g_{\bar I}(x_{\bar I})\) by separability, substituting
\(x_{\bar I}=z\) into the objective gives
\[
p(x_I,z)
=
f(A_Ix_I+A_{\bar I}z)+g_I(x_I)+a+g_{\bar I}(z).
\]

\begin{definition}[Feature screening]
The \emph{feature screening} of \(p\) at \((I,z)\) is the FR
representation
\[
S^{F}_{I,z}(p)
:=
\bigl(
T^{\mathrm d}_{A_{\bar I}z}f,\;
g_I,\;
A_I,\;
a+g_{\bar I}(z)
\bigr).
\]
\end{definition}

Here, \(S^F_{I,z}(p)(x_I)=p(x_I,z)\). This means the screened objective is exactly the original objective with \(x_{\bar I}\) fixed at \(z\).

\begin{example}[Feature screening for the Lasso]
\label{ex:lasso-feature}
Classical feature screening for the Lasso is the special case \(z=0\)
of the proposed feature screening operator \(S^F_{I,z}\). The Lasso
problem
\[
\min_{x\in\mathbb R^n}
\frac12\|Ax-b\|_2^2+\lambda\|x\|_1,
\qquad \lambda>0,
\]
is an FR representation \(p=(f,g,A,0)\) with
\(f(y)=\frac12\|y-b\|_2^2\) and \(g(x)=\lambda\|x\|_1\).

In sparse learning, feature screening certifies that a block of
coefficients is inactive at the optimum. For the Lasso, inactivity means
that the corresponding coefficients vanish,
\[
x_{\bar I}=0,
\]
exactly the special case \(z=0\) of the general feature-screening
assumption \(x_{\bar I}=z\). Since \(z=0\), the screened function
\(S^F_{I,0}(p)\) is simply the restriction of the Lasso representation
to the remaining coordinates, with corresponding problem
\[
\min_{x_I\in\mathbb R^{|I|}}
\frac12\|A_Ix_I-b\|_2^2+\lambda\|x_I\|_1.
\]
The proposed definition thus recovers classical Lasso feature
screening, while extending it from coefficients certified to be zero
to coefficients certified to take any fixed value \(z\).
\end{example}

Intuitively, feature screening simplifies the objective function by fixing a block of the variables in \(x\). Sample screening simplifies the objective function by replacing a block of the loss function \(f\) by its affine part. We now provide a formal definition of sample screening.

Let
\(J\sqcup\bar J=\{1,\ldots,m\}\) and \(s\in\mathbb R^{\bar J}\), and
suppose that
\begin{equation}
    \label{eq:general-sample-screening-condition}
    s\in\partial f_{\bar J}(A_{\bar J}x),
\end{equation}
i.e., \(s\) is a fixed subgradient of the block \(\bar J\) of \(f\) evaluated at
\(x\). By the Fenchel--Young inequality \(f(y)+f^*(s)\geq\langle s,y\rangle\), which holds with equality exactly when \(s\in\partial
f(y)\), the corresponding block of \(f\) can be replaced by a simple affine function, i.e.,
\[
f_{\bar J}(A_{\bar J}x)
=
\langle s,A_{\bar J}x\rangle-f_{\bar J}^{*}(s).
\]
Substituting this into the objective gives
\[
p(x)
=
f_J(A_Jx)+g(x)+\langle A_{\bar J}^{\top}s,x\rangle+a-f_{\bar J}^{*}(s).
\]

The above analysis motivates the definition of sample screening, independent of the condition \eqref{eq:general-sample-screening-condition}. 

\begin{definition}[Sample screening]
The \emph{sample screening} of \(p\) at \((J,s)\) is 
\[
S^{S}_{J,s}(p)
:=
\bigl(
f_J,\;
T^{\mathrm s}_{A_{\bar J}^{\top}s}g,\;
A_J,\;
a-f_{\bar J}^{*}(s)
\bigr),
\]
\end{definition}

For $x \in \mathbb{R}^n$ and $s\in \mathbb{R}^m$, we have $$S^S_{J,s}(p)(x) \leq  p(x),$$ i.e., the screened objective is less than or equal to the original objective and the equality holds if and only if the condition \eqref{eq:general-sample-screening-condition} is satisfied. 

Geometrically, sample screening replaces the eliminated loss block by
its Fenchel--Young affine representation. This
replacement potentially reduces the computational cost of the resulting problem.
Indeed, it reduces the nonlinear loss from $f$ to $f_J$ and compresses
the contribution of the eliminated rows $A_{\bar J}$ into the fixed
vector $A_{\bar J}^{\top}s$.

Note that the affine-replacement view differs from the classical view of sample
screening, which eliminates samples by fixing and removing their
corresponding dual variables; see, e.g., \citet{ogawa2013}. Nevertheless,
affine replacement has been used to develop fast optimization algorithms, see e.g., \citep{johnson2018fast}. %Our contribution is to formulate it as a transformation on $\Gamma$ and thereby establish its formal duality with feature screening.

\begin{example}[Sample screening for the soft-margin SVM]
\label{ex:svm-sample}
Classical sample screening for the soft-margin SVM is the special case
\(s=0\) of the proposed sample screening operator \(S^S_{J,s}\). The
soft-margin SVM problem
\[
\min_{x\in\mathbb R^n}
\frac12\|x\|_2^2
+
C\sum_{i=1}^m
\max\!\left(0,\,1-b_i\langle a_i,x\rangle\right),
\qquad C>0,
\]
is represented in FR form by \(p=(f,g,A,0)\), where
\(f(y)=C\sum_{i=1}^m\max(0,1-b_i y_i)\), \(g(x)=\frac12\|x\|_2^2\), and
\(A=(a_1,\ldots,a_m)^\top\).

From \citep{ogawa2013}, the idea of sample screening is that, if one can identify a sample \(i\) that is correctly classified, i.e.,\footnote{The original sample screening rule in~\citep{ogawa2013} also considers the condition
\(
1-b_i\langle a_i,x\rangle>0.
\)
This is another special case covered by our generalized sample screening, but we omit it here for simplicity.}
\begin{equation}
    1-b_i\langle a_i,x\rangle<0
    \label{eq:svm-sample-screening}
\end{equation}
then one can remove the corresponding loss term from the loss function.

Then, index $i$ satisfies the following condition
\begin{equation}
    0\in\partial f_{{i}}(A_{\{i\},:}x).
    \label{eq:svm-sample-screening-subgradient}
\end{equation}
This is exactly the special case \(s=0\) and $\bar J=\{i\}$ of the assumption
\(s\in\partial f_{\bar J}(A_{\bar J}x)\).
Here, note that \eqref{eq:svm-sample-screening} implies \eqref{eq:svm-sample-screening-subgradient}, but not equivalent to it.

In this case, the translation
terms vanish, \(A_{\bar J}^{\top}s=0\) and \(f_{\bar J}^*(s)=0\), so
\(S^S_{J,0}(p)=(f_J,g,A_J,0)\), and the screened problem is
\[
\min_{x\in\mathbb R^n}
\frac12\|x\|_2^2
+
C\sum_{i\in J}
\max\!\left(0,\,1-b_i\langle a_i,x\rangle\right).
\]
The proposed definition thus recovers classical SVM sample screening,
while extending it from samples certified to satisfy \(0\in\partial
f_i\) to samples certified to satisfy an arbitrary subgradient
condition \(s\in\partial f_i\).
\end{example}

\begin{corollary}[Screening reduces problem size]
\label{cor:complexity}
For every \(p=(f,g,A,a)\in\Gamma_{m,n}\), every
\(I\sqcup\bar I=\{1,\ldots,n\}\), and every \(J\sqcup\bar J=\{1,\ldots,m\}\),
\[
\operatorname{size}\bigl(S^F_{I,z}(p)\bigr)=(m,|I|),
\qquad
\operatorname{size}\bigl(S^S_{J,s}(p)\bigr)=(|J|,n).
\]
\end{corollary}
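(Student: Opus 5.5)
The plan is to read off the sizes directly from the definitions of \(S^F_{I,z}\) and \(S^S_{J,s}\). The cleaner route, which the section intends, is to first record the decomposition of each screening as a translation followed by a restriction. Concretely, I would verify that
\[
S^F_{I,z}(p)=R^F_I\bigl(T_{k}(p)\bigr),\qquad k=\bigl(A_{\bar I}z,\;g_{\bar I}(z),\;0\bigr)\in K_{m,n},
\]
and
\[
S^S_{J,s}(p)=R^S_J\bigl(T_{\ell}(p)\bigr),\qquad \ell=\bigl(0,\;-f^*_{\bar J}(s),\;A_{\bar J}^\top s\bigr)\in K_{m,n}.
\]
For the first, \(T_k(p)=(T^{\mathrm d}_{A_{\bar I}z}f,\,g,\,A,\,a+g_{\bar I}(z))\). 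Feature restriction then keeps \(f\)-slot and constant and replaces \(g,A\) by \(g_I,A_I\). For the second, \(T_\ell(p)=(f,\,T^{\mathrm s}_{A_{\bar J}^\top s}g,\,A,\,a-f^*_{\bar J}(s))\), and sample restriction replaces \(f,A\) by \(f_J,A_J\). These match the definitions term by term.

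Given the decomposition, the size claim follows from two facts. First, each \(T_k\) maps \(\Gamma_{m,n}\) into \(\Gamma_{m,n}\), since translation leaves \(A\) untouched. Second, \(R^F_I:\Gamma_{m,n}\to\Gamma_{m,|I|}\) and \(R^S_J:\Gamma_{m,n}\to\Gamma_{|J|,n}\) by definition, because \(A_I\in\mathbb R^{m\times|I|}\) and \(A_J\in\mathbb R^{|J|\times n}\). Composing gives \(\operatorname{size}(S^F_{I,z}(p))=(m,|I|)\) and \(\operatorname{size}(S^S_{J,s}(p))=(|J|,n)\).

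The only genuine point requiring care is admissibility, so that the outputs really lie in \(\Gamma\) and the constants are real numbers. One must check the following.
\begin{itemize}
\item The new constant must be finite. This requires \(z\in\operatorname{dom} g_{\bar I}\) and \(s\in\operatorname{dom} f^*_{\bar J}\); I would state these as standing implicit assumptions on the screening parameters. They are automatic in the intended settings: \(x_{\bar I}=z\) occurs at a feasible point, and \(s\) is a subgradient, so it lies in \(\operatorname{dom} f^*_{\bar J}\).
\item The translated and restricted functions must remain proper closed convex. This holds because translations preserve these properties and separable blocks of proper closed convex functions are proper closed convex. Properness of \(T^{\mathrm d}_{A_{\bar I}z}f\) is inherited from \(f\).
\end{itemize}
With that noted, the corollary is immediate from the dimension bookkeeping above.
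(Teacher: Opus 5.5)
Your argument is correct and is essentially the paper's: both come down to the matrix slot of \(S^F_{I,z}(p)\) being \(A_I\in\mathbb R^{m\times|I|}\) and that of \(S^S_{J,s}(p)\) being \(A_J\in\mathbb R^{|J|\times n}\). The paper reads this directly off the definitions, so your detour through the translation--restriction factorization is unnecessary but harmless, since you verify it yourself; the paper only proves that factorization afterwards, as Theorem~\ref{thm:decomposition}. Your caveat that \(g_{\bar I}(z)\) and \(f^*_{\bar J}(s)\) must be finite for the screened quadruples to be admissible is a legitimate point that the paper leaves implicit.
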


\begin{proof}
Immediate from the definitions above: \(S^F_{I,z}(p)\) has matrix slot
\(A_I\in\mathbb R^{m\times|I|}\), and \(S^S_{J,s}(p)\) has matrix slot
\(A_J\in\mathbb R^{|J|\times n}\).
\end{proof}
\noteCLAUDE{Doc2 dependencies: none. Uses only the definitions of
\(S^F_{I,z}\), \(S^S_{J,s}\) from this section.}

\subsection{Feature--Sample Translation}
In Subsection~\ref{subsec:translation}, we introduced the translation \(T_k\) and its translation parameter \(k\). In this subsection, we introduce feature and sample translation, built on top of \(T_k\) and \(k\).
Surprisingly, these objects also admit equivariance under FR duality.
These notions play an important role in establishing the main results proved in the following subsections.

\begin{definition}[Feature--sample translation]
    \label{def:feature-sample-translation}
    We define the feature translation
    \begin{equation*}
        T^F_{I, z}(p) = T_k(p), \qquad
        k = \kappa^F_{I, z}(p) :=  (A_{\bar I}z,\;g_{\bar I}(z),\;0)\in K_{m, n},
    \end{equation*}
    where \(k\) is a feature translation parameter depending on \(p\), index set \(I\) and \(z\in \mathbb R^{\bar I}\).

    Symmetrically, we define the sample translation
    \begin{equation*}
        T^S_{J, s}(p) = T_h(p), \qquad
        h = \kappa^S_{J, s}(p) := (0,\;-f_{\bar J}^{*}(s),\;A_{\bar J}^{\top}s)\in K_{m, n},
    \end{equation*}
    where \(h\) is a sample translation parameter depending on \(p\), index set \(J\) and \(s\in \mathbb R^{\bar J}\).
\end{definition}

Now we show that there is an equivariance structure on both the
parameter level and the translation level. Definition~\ref{def:equivariance}
(Section~\ref{sec:operators}) defines equivariance for transformations
of \(\Gamma\); the parameter maps \(\kappa^F_{I,z},\kappa^S_{J,s}:\Gamma\to K\)
satisfy the same pattern, now paired with the dual-parameter map
\(k\mapsto k^*\) on \(K\) rather than FR duality on \(\Gamma\).

\begin{proposition}[Parameter equivariance]
\label{prop:parameter-equivariance}
For every \(p=(f,g,A,a)\in\Gamma_{m,n}\), every
\(I\sqcup\bar I=\{1,\ldots,n\}\), \(z\in\mathbb R^{\bar I}\), every
\(J\sqcup\bar J=\{1,\ldots,m\}\), and \(s\in\mathbb R^{\bar J}\),
\begin{align*}
        (\kappa^F_{I, z}(p))^* = \kappa^S_{I, z}(p^*),\\
        (\kappa^S_{J, s}(p))^* = \kappa^F_{J, s}(p^*).
\end{align*}
\end{proposition}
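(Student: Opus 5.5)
The plan is a direct componentwise computation. Both sides are elements of a translation parameter space, and the dual-parameter map is \(k=(b,c,d)\mapsto k^*=(-d,-c,-b)\). So it suffices to write out \(\kappa^F_{I,z}(p)\), apply this map, and compare with \(\kappa^S_{I,z}(p^*)\) slot by slot. The second identity then follows by the mirror computation, or formally from the first via Proposition~\ref{prop:involution} and \((k^*)^*=k\).

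For the first identity, write \(p=(f,g,A,a)\in\Gamma_{m,n}\). Then \(\kappa^F_{I,z}(p)=(A_{\bar I}z,\,g_{\bar I}(z),\,0)\in K_{m,n}\), and so
\[
(\kappa^F_{I,z}(p))^*=(0,\,-g_{\bar I}(z),\,-A_{\bar I}z)\in K_{n,m}.
\]
On the other side, \(p^*=(g^*,f^*,-A^\top,-a)\in\Gamma_{n,m}\). Its loss function is \(g^*\), its regularizer is \(f^*\), and its matrix is \(-A^\top\). Its "samples" are indexed by \(\{1,\ldots,n\}\), so the partition \(I\sqcup\bar I\) is a legitimate sample partition for \(p^*\). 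Applying the definition of \(\kappa^S\) to \(p^*\) gives
\[
\kappa^S_{I,z}(p^*)=\bigl(0,\,-(g^*)^*_{\bar I}(z),\,((-A^\top)_{\bar I})^\top z\bigr).
\]

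Three facts finish the first identity. By separability, \((g^*)_{\bar I}=(g_{\bar I})^*\). By biconjugation for the closed proper convex \(g_{\bar I}\), \((g_{\bar I})^{**}=g_{\bar I}\), so the middle slot is \(-g_{\bar I}(z)\). Finally, the rows \(\bar I\) of \(-A^\top\) form \(-A_{\bar I}^\top\), whose transpose is \(-A_{\bar I}\); this gives \(-A_{\bar I}z\) in the last slot. The first slots are both \(0\), so the two triples coincide.

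For the second identity, \(\kappa^S_{J,s}(p)=(0,-f^*_{\bar J}(s),A_{\bar J}^\top s)\). Hence
\[
(\kappa^S_{J,s}(p))^*=(-A_{\bar J}^\top s,\;f^*_{\bar J}(s),\;0).
\]
Meanwhile \(\kappa^F_{J,s}(p^*)\) uses the columns \(\bar J\) of \(-A^\top\), which form \(-A_{\bar J}^\top\), and the regularizer \(f^*\) of \(p^*\) restricted to \(\bar J\). This yields \((-A_{\bar J}^\top s,\,(f^*)_{\bar J}(s),\,0)\), and \((f^*)_{\bar J}=(f_{\bar J})^*\) by separability. Alternatively, apply the first identity to \(p^*\) with index set \(J\) and vector \(s\), then take duals of both sides using \((k^*)^*=k\) and \((p^*)^*=p\).

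The main obstacle is not analytic but bookkeeping, because the roles of rows and columns swap under duality. The points to get right are that sample-restriction of \(-A^\top\) to \(\bar I\) corresponds to the column block \(A_{\bar I}\) of \(A\), that the transpose in the definition of \(\kappa^S\) undoes the transpose in \(A^\top\), and that the sign coming from \(-A^\top\) matches the sign introduced by the dual-parameter map. I would also note that \(g_{\bar I}(z)\) may be \(+\infty\). In that case the translation parameter is not in \(\mathbb R\), and the identity should be read with extended values or under the assumption \(z\in\operatorname{dom} g_{\bar I}\); the computation itself is unaffected.
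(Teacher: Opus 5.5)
Your proposal is correct and follows the paper's proof essentially step for step. It uses the same componentwise computation with $k^*=(-d,-c,-b)$, and the first identity rests on the same three facts: separability $(g^*)_{\bar I}=(g_{\bar I})^*$, biconjugation of $g_{\bar I}$, and the row/column bookkeeping for $-A^\top$. The second identity is checked by the same direct computation. Two additions are also sound. Your alternative derivation of the second identity is valid: apply the first identity to $p^*$ with $(J,s)$, then dualize using $(k^*)^*=k$ and Proposition~\ref{prop:involution}. Your caveat that $g_{\bar I}(z)$ or $f^*_{\bar J}(s)$ may equal $+\infty$, so the parameter need not lie in $K_{m,n}$, is a fair point the paper leaves implicit.
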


\begin{proof}
Using \(k^*=(-d,-c,-b)\) for \(k=(b,c,d)\) (Section~\ref{sec:operators}),
\[
(\kappa^F_{I,z}(p))^*
=
(A_{\bar I}z,\,g_{\bar I}(z),\,0)^*
=
(0,\,-g_{\bar I}(z),\,-A_{\bar I}z).
\]
Write \(p^*=(g^*,f^*,-A^\top,-a)\). By definition, the sample
translation parameter of \(p^*\) at \((I,z)\) is
\[
\kappa^S_{I,z}(p^*)
=
\bigl(0,\;-(g^*)_{\bar I}^{*}(z),\;(-A^\top)_{\bar I}^{\top}z\bigr).
\]
Since \(g\) is separable, its conjugate restricts termwise,
\((g^*)_{\bar I}=(g_{\bar I})^*\), so
\((g^*)_{\bar I}^{*}(z)=(g_{\bar I})^{**}(z)=g_{\bar I}(z)\) by
biconjugation, using that \(g_{\bar I}\), a finite sum of closed,
proper, convex functions, is itself closed, proper, convex. Rows
\(\bar I\) of \(-A^\top\) equal \(-(A_{\bar I})^\top\), so
\((-A^\top)_{\bar I}^\top z=-A_{\bar I}z\). Hence
\[
\kappa^S_{I,z}(p^*)=(0,\,-g_{\bar I}(z),\,-A_{\bar I}z)=(\kappa^F_{I,z}(p))^*.
\]

We now prove the second identity, which does not follow from the first
by simply exchanging \(f\) and \(g\): Definition~\ref{def:feature-sample-translation}
already builds \(\kappa^F_{I,z}\) and \(\kappa^S_{J,s}\) asymmetrically,
so it needs its own computation.
Using \(k^*=(-d,-c,-b)\) again,
\[
(\kappa^S_{J,s}(p))^*
=
(0,\,-f_{\bar J}^{*}(s),\,A_{\bar J}^{\top}s)^*
=
(-A_{\bar J}^{\top}s,\,f_{\bar J}^{*}(s),\,0).
\]
Write \(p^*=(g^*,f^*,-A^\top,-a)\in\Gamma_{n,m}\). Since \(J\subseteq
\{1,\ldots,m\}\) indexes the second slot's problem size for \(p^*\), the
feature translation parameter of \(p^*\) at \((J,s)\) is
\[
\kappa^F_{J,s}(p^*)
=
\bigl((-A^\top)_{\bar J}s,\;(f^*)_{\bar J}(s),\;0\bigr),
\]
by the same definition as \(\kappa^F_{I,z}\), with \(g^*\) taking the
role of the first slot's function and \(-A^\top\) the role of the
matrix. Columns \(\bar J\) of \(-A^\top\) equal \(-(A_{\bar J})^\top\),
since columns of \(A^\top\) are rows of \(A\) transposed, so
\((-A^\top)_{\bar J}s=-A_{\bar J}^\top s\). Since \(f\) is separable, its
conjugate restricts termwise, \((f^*)_{\bar J}=(f_{\bar J})^*\), so
\((f^*)_{\bar J}(s)=f_{\bar J}^{*}(s)\), the same separability fact used
above, now applied to \(f\).
Hence
\[
\kappa^F_{J,s}(p^*)=(-A_{\bar J}^{\top}s,\,f_{\bar J}^{*}(s),\,0)=(\kappa^S_{J,s}(p))^*.
\]
Unlike the first identity, this computation never invokes
biconjugation: \(\kappa^S_{J,s}(p)\) already carries a conjugate,
\(f_{\bar J}^*(s)\), in its second slot, so dualizing and restricting
commute with a single separability step, not two.
\end{proof}

Proposition~\ref{prop:parameter-equivariance} shows that the
feature--sample asymmetry, a domain shift and a value shift versus a
slope shift and a conjugate value shift, is fully absorbed by the
canonical translation parameters: FR duality exchanges one for the
other. The next step is to lift this parameter equivariance from
parameters to translation operators.

\begin{proposition}[Feature--sample translation equivariance]
\label{prop:translation-equivariance-screening}
For every \(p=(f,g,A,a)\in\Gamma_{m,n}\), every
\(I\sqcup\bar I=\{1,\ldots,n\}\), \(z\in\mathbb R^{\bar I}\), every
\(J\sqcup\bar J=\{1,\ldots,m\}\), and \(s\in\mathbb R^{\bar J}\),
\begin{align*}
        (T^F_{I, z}(p))^* = T^S_{I, z}(p^*),\\
        (T^S_{J, s}(p))^* = T^F_{J, s}(p^*).
\end{align*}
\end{proposition}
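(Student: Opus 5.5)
The proof is a direct composition of two earlier results: Translation equivariance (\cref{thm:fenchel-equivariance}) and Parameter equivariance (\cref{prop:parameter-equivariance}). No new conjugate computation is needed; the work is bookkeeping about which parameter is attached to which representation.

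For the first identity, fix \(p\in\Gamma_{m,n}\), \(I\sqcup\bar I=\{1,\ldots,n\}\), and \(z\in\mathbb R^{\bar I}\), and set \(k:=\kappa^F_{I,z}(p)\in K_{m,n}\). By \cref{def:feature-sample-translation}, \(T^F_{I,z}(p)=T_k(p)\). Since \(k\) is now a fixed element of \(K_{m,n}\), \cref{thm:fenchel-equivariance} applies and gives
\[
(T^F_{I,z}(p))^*=(T_k(p))^*=T_{k^*}(p^*).
\]
The first identity of \cref{prop:parameter-equivariance} gives \(k^*=(\kappa^F_{I,z}(p))^*=\kappa^S_{I,z}(p^*)\). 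Hence
\[
T_{k^*}(p^*)=T_{\kappa^S_{I,z}(p^*)}(p^*)=T^S_{I,z}(p^*),
\]
where the last equality is the definition of sample translation applied to \(p^*\in\Gamma_{n,m}\) at the index set \(I\subseteq\{1,\ldots,n\}\). This works because \(n\) is the row count of \(p^*\), so \(I\) plays the role of a sample index set there.

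The second identity follows the same pattern. Set \(h:=\kappa^S_{J,s}(p)\), so \(T^S_{J,s}(p)=T_h(p)\). By \cref{thm:fenchel-equivariance}, \((T_h(p))^*=T_{h^*}(p^*)\). By the second identity of \cref{prop:parameter-equivariance}, \(h^*=\kappa^F_{J,s}(p^*)\). Therefore \((T^S_{J,s}(p))^*=T^F_{J,s}(p^*)\).

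The only point needing care is the dependence of the parameter on \(p\). \cref{thm:fenchel-equivariance} is stated for a fixed parameter \(k\), while \(T^F_{I,z}\) selects its parameter as a function of \(p\). So the argument must evaluate \(k=\kappa^F_{I,z}(p)\) first, then apply the fixed-parameter law, and only then rewrite \(k^*\) as a parameter computed from \(p^*\). It must also check that the dimensions match, namely \(k^*\in K_{n,m}\), so that \(T_{k^*}\) acts on \(\Gamma_{n,m}\ni p^*\).
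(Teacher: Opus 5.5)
Your proposal is correct and matches the paper's proof. Like the paper, you apply Translation equivariance to the fixed parameter \(k=\kappa^F_{I,z}(p)\) and then rewrite \(k^*\) as \(\kappa^S_{I,z}(p^*)\) via Parameter equivariance. For the second identity you explicitly invoke the second half of Parameter equivariance, which the paper only indicates by symmetry.
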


\begin{proof}
By Translation equivariance (Proposition~\ref{thm:fenchel-equivariance},
Section~\ref{sec:operators}) applied to \(k=\kappa^F_{I,z}(p)\),
\[
(T^F_{I,z}(p))^*
=
\bigl(T_{\kappa^F_{I,z}(p)}(p)\bigr)^*
=
T_{(\kappa^F_{I,z}(p))^*}(p^*).
\]
By Parameter equivariance (Proposition~\ref{prop:parameter-equivariance}),
\((\kappa^F_{I,z}(p))^*=\kappa^S_{I,z}(p^*)\), so
\[
(T^F_{I,z}(p))^*
=
T_{\kappa^S_{I,z}(p^*)}(p^*)
=
T^S_{I,z}(p^*).
\]
The second identity follows the same argument, exchanging the roles of
\(f\) and \(g\), \(I\) and \(J\), and domain and slope translation.
\end{proof}

\subsection{Decomposition of Screening}

This subsection proves that feature screening and sample screening are
not primitive: each decomposes into a translation followed by a
restriction.

\begin{theorem}[Screening decomposition]
\label{thm:decomposition}
For every \(p=(f,g,A,a)\), every \(I\sqcup\bar I=\{1,\ldots,n\}\),
\(z\in\mathbb R^{\bar I}\), \(J\sqcup\bar J=\{1,\ldots,m\}\), and
\(s\in\mathbb R^{\bar J}\),
\[
S^{F}_{I,z}
=
R^{F}_{I}\circ T^{F}_{I,z},
\qquad
S^{S}_{J,s}
=
R^{S}_{J}\circ T^{S}_{J,s}.
\]
\end{theorem}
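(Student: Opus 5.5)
The plan is a direct slot-by-slot comparison of FR representations: both sides of each identity are quadruples, so it suffices to show that their four components agree. No earlier equivariance result is needed; only the definitions of \(T_k\), \(R^F_I\), \(R^S_J\), \(\kappa^F_{I,z}\), \(\kappa^S_{J,s}\) and separability enter.

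For the feature identity, I first compute \(T^F_{I,z}(p)=T_k(p)\) with \(k=(A_{\bar I}z,\,g_{\bar I}(z),\,0)\). By the definition of translation this gives
\[
T^F_{I,z}(p)=\bigl(T^{\mathrm d}_{A_{\bar I}z}f,\;T^{\mathrm s}_{0}g,\;A,\;a+g_{\bar I}(z)\bigr)=\bigl(T^{\mathrm d}_{A_{\bar I}z}f,\;g,\;A,\;a+g_{\bar I}(z)\bigr),
\]
since a slope translation by \(0\) is the identity. Applying \(R^F_I\) keeps the first slot and the constant, replaces \(g\) by \(g_I\), and replaces \(A\) by \(A_I\). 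The result is exactly the defining quadruple of \(S^F_{I,z}(p)\).

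The sample identity is the mirror computation. With \(h=(0,\,-f^*_{\bar J}(s),\,A_{\bar J}^\top s)\), I get
\[
T^S_{J,s}(p)=\bigl(f,\;T^{\mathrm s}_{A_{\bar J}^\top s}g,\;A,\;a-f^*_{\bar J}(s)\bigr),
\]
using \(T^{\mathrm d}_0 f=f\). Applying \(R^S_J\) replaces \(f\) by \(f_J\) and \(A\) by \(A_J\), and leaves the second slot and the constant unchanged. This matches \(S^S_{J,s}(p)\).

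The only point needing care is a bookkeeping subtlety in the order of operations. The translation parameters \(\kappa^F_{I,z}(p)\) and \(\kappa^S_{J,s}(p)\) are computed from the unrestricted \(p\), using \(A_{\bar I}\), \(g_{\bar I}\), \(A_{\bar J}\) and \(f_{\bar J}\). Restriction afterwards acts only on slots that the translation either left untouched or altered in a way restriction does not see. In the feature case, restriction touches \(g\) and the columns of \(A\), while the translation altered only \(f\) and the constant. In the sample case, restriction touches \(f\) and the rows of \(A\), while the tilted \(g\) sits in the second slot and is kept whole; this is consistent with the tilt vector \(A_{\bar J}^\top s\in\mathbb R^n\) living in the full \(x\)-space. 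I must also confirm that the translated functions remain admissible, i.e. closed, proper and convex. For the feature case, \(g_I\) must be a valid separable function of \(x_I\). These facts are immediate, and once they are checked the theorem follows.
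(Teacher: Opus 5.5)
Your proposal is correct and follows the paper's proof: you compute $T^F_{I,z}(p)$ and $T^S_{J,s}(p)$ from the definition of $T_k$, use that the slope (respectively domain) component of the parameter vanishes, then apply $R^F_I$ (respectively $R^S_J$) and match the four slots against the definitions of $S^F_{I,z}$ and $S^S_{J,s}$. Your admissibility check is not needed for the identity, since both sides are given by the same formulas; note that a finite constant slot requires $g_{\bar I}(z)<\infty$ and $f^*_{\bar J}(s)<\infty$, which is a well-posedness issue of the screening definitions themselves rather than of this decomposition.
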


\begin{proof}
By Definition~\ref{def:feature-sample-translation}, \(\kappa^F_{I,z}(p)\) has zero slope component, so
\[
T^F_{I,z}(p)
=
\bigl(T^{\mathrm d}_{A_{\bar I}z}f,\;g,\;A,\;a+g_{\bar I}(z)\bigr).
\]
Applying \(R^F_I\), which restricts the second and third slots to
\(I\) and leaves the first and fourth untouched,
\[
R^F_I\bigl(T^F_{I,z}(p)\bigr)
=
\bigl(T^{\mathrm d}_{A_{\bar I}z}f,\;g_I,\;A_I,\;a+g_{\bar I}(z)\bigr),
\]
which is exactly \(S^F_{I,z}(p)\) from the definition above.

The sample case follows by an analogous argument. Since \(\kappa^S_{J,s}(p)\)
has zero domain component,
\[
T^S_{J,s}(p)
=
\bigl(f,\;T^{\mathrm s}_{A_{\bar J}^{\top}s}g,\;A,\;a-f_{\bar J}^{*}(s)\bigr).
\]
Applying \(R^S_J\), which restricts the first and third slots to \(J\)
and leaves the second and fourth untouched,
\[
R^S_J\bigl(T^S_{J,s}(p)\bigr)
=
\bigl(f_J,\;T^{\mathrm s}_{A_{\bar J}^{\top}s}g,\;A_J,\;a-f_{\bar J}^{*}(s)\bigr),
\]
which is exactly \(S^S_{J,s}(p)\).
\end{proof}
\noteCLAUDE{Doc2 dependencies: Definition "Translation" (\(T_k(p)\))
and Definition "Feature and sample restriction" (\(R^F_I\), \(R^S_J\))
only. No proposition or lemma needed.} %\noteLE{Verified.}

Although feature and sample screening arise from different assumptions, a fixed primal block and a fixed subgradient block, they share the same two-step construction: a primitive translation followed by a primitive restriction. This common structure, together with the equivariance of translation and restriction under FR duality, provides the foundation for establishing the duality between feature and sample screening.

\subsection{Duality of Screening}

This subsection proves that feature screening and sample screening are
equivariant under FR duality: dualizing one produces exactly the other,
on the dual representation. Feature screening moves the eliminated
block of \(x\) into a domain translation of \(f\), then discards it
from \(g\). FR duality exchanges domain translations
with slope translations and exchanges the roles of \(f\) and \(g\)
(Section~\ref{sec:operators}). Sample screening moves an eliminated
block of \(y\) into a slope translation of \(g\), then discards it
from \(f\). Matching these two descriptions suggests that dualizing a
feature screening produces exactly a sample screening, with no other
operation involved.

\begin{theorem}[Feature--sample duality]
\label{thm:duality}
Let \(p=(f,g,A,a)\in\Gamma_{m,n}\). For every
\(I\sqcup\bar I=\{1,\ldots,n\}\) and \(z\in\mathbb R^{\bar I}\),
\(S^F_{I,z}\) and \(S^S_{I,z}\) are equivariant under FR duality
(Definition~\ref{def:equivariance}, Section~\ref{sec:operators}):
\[
\left(S^{F}_{I,z}(p)\right)^*
=
S^{S}_{I,z}(p^*).
\]
Symmetrically, for every \(J\sqcup\bar J=\{1,\ldots,m\}\) and
\(s\in\mathbb R^{\bar J}\),
\[
\left(S^{S}_{J,s}(p)\right)^*
=
S^{F}_{J,s}(p^*).
\]
\end{theorem}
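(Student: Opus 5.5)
The plan is to use the decomposition of Theorem~\ref{thm:decomposition} and push FR duality through each factor, using the equivariance laws already established. For the first identity, I would write
\[
\bigl(S^F_{I,z}(p)\bigr)^*
=
\bigl(R^F_I(T^F_{I,z}(p))\bigr)^*
\]
and first apply Restriction equivariance (Proposition~\ref{prop:restriction-equivariance}) to the representation \(q:=T^F_{I,z}(p)\). This yields \(R^S_I(q^*)\). Since translation leaves \(A\) untouched and preserves separability and closed proper convexity of \(f,g\), the representation \(q\) is an admissible separable FR representation. The domain translate of a separable \(f\) is separable, and the slope translate of a separable \(g\) is separable. Hence the proposition applies to \(q\).

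Next I would apply Feature--sample translation equivariance (Proposition~\ref{prop:translation-equivariance-screening}). It gives \(q^*=(T^F_{I,z}(p))^*=T^S_{I,z}(p^*)\), and therefore
\[
\bigl(S^F_{I,z}(p)\bigr)^*=R^S_I\bigl(T^S_{I,z}(p^*)\bigr).
\]
Finally I would invoke Theorem~\ref{thm:decomposition} again, now for the representation \(p^*\in\Gamma_{n,m}\) with the sample index set \(I\subseteq\{1,\ldots,n\}\) and \(s=z\). This identifies the right-hand side with \(S^S_{I,z}(p^*)\).

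The second identity follows by the mirror chain:
\[
\bigl(S^S_{J,s}(p)\bigr)^*=\bigl(R^S_J(T^S_{J,s}(p))\bigr)^*=R^F_J\bigl((T^S_{J,s}(p))^*\bigr)=R^F_J\bigl(T^F_{J,s}(p^*)\bigr)=S^F_{J,s}(p^*).
\]
This uses the second halves of Propositions~\ref{prop:restriction-equivariance} and~\ref{prop:translation-equivariance-screening}, together with the decomposition for \(p^*\).

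The only delicate point is bookkeeping rather than analysis. In the step \(R^S_I(q^*)\), the restriction is applied to \(q^*\), so I must make sure that the parameters \(\kappa^S_{I,z}\) inside \(T^S_{I,z}(p^*)\) are computed from \(p^*\) and not from \(q^*\). This is exactly how Proposition~\ref{prop:translation-equivariance-screening} is stated, so no mismatch occurs.

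I also need the conjugate of the separable function \(T^{\mathrm s}_d g\) restricted to \(I\) to agree with the restriction of its conjugate, which is the separability used in Proposition~\ref{prop:restriction-equivariance}. Here I would briefly check that \(T^{\mathrm s}_d g=\sum_i (g_i+d_i\,\cdot)\) is again coordinatewise separable. Likewise \(T^{\mathrm d}_b f=\sum_j f_j(\cdot+b_j)\) is separable. I expect this verification to be the main, though minor, obstacle.
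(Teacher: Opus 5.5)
Your proposal is correct and follows the paper's proof: decompose via Theorem~\ref{thm:decomposition}, apply Proposition~\ref{prop:restriction-equivariance} to \(q=T^F_{I,z}(p)\), substitute Proposition~\ref{prop:translation-equivariance-screening}, and recompose on \(p^*\), with the mirrored chain giving the second identity. Your extra check that \(q\) is again a separable, closed, proper, convex representation, so that the restriction proposition actually applies to it, is a detail the paper leaves implicit.
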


\begin{proof}
We prove the first identity; the second follows by a symmetric argument
with the roles of \(f\) and \(g\), \(I\) and \(J\), domain and slope
translation, exchanged. The proof combines Screening decomposition
(Theorem~\ref{thm:decomposition}, this section) with Restriction
equivariance (Section~\ref{sec:operators}) and Feature--sample
translation equivariance
(Proposition~\ref{prop:translation-equivariance-screening}, this
section).

By Theorem~\ref{thm:decomposition}, \(S^F_{I,z}(p)=R^F_I(T^F_{I,z}(p))\).
By Restriction equivariance, applied to \(q:=T^F_{I,z}(p)\),
\[
\left(S^F_{I,z}(p)\right)^*
=
\left(R^F_I(q)\right)^*
=
R^S_I(q^*)
=
R^S_I\bigl((T^F_{I,z}(p))^*\bigr).
\]
By Feature--sample translation equivariance,
\((T^F_{I,z}(p))^*=T^S_{I,z}(p^*)\). Substituting,
\[
\left(S^F_{I,z}(p)\right)^*
=
R^S_I\bigl(T^S_{I,z}(p^*)\bigr)
=
S^S_{I,z}(p^*),
\]
the last equality by Theorem~\ref{thm:decomposition} applied to \(p^*\).
This proves the first identity.

The second identity is the mirror computation, exchanging the roles of
\(f\) and \(g\), \(I\) and \(J\), and domain and slope translation,
using the second halves of Screening decomposition, Restriction
equivariance, and Feature--sample translation equivariance.
\end{proof}
\noteCLAUDE{Dependencies: Theorem~\ref{thm:decomposition} and
Proposition~\ref{prop:translation-equivariance-screening} (this
section), plus Restriction equivariance (doc2). No convex-analysis fact
is needed inline anymore: the separability/biconjugation argument now
lives entirely inside Proposition~\ref{prop:parameter-equivariance}'s
proof, cited transitively through
Proposition~\ref{prop:translation-equivariance-screening}.}

Theorem~\ref{thm:duality} says a single screening dualizes to a single
screening of the other type. What happens when a feature screening and a sample screening are both applied to the same \(p\)? 
This composition was used in \citep{shibagaki2016simultaneous} during the solving process, where it is referred to as simultaneous screening. The following result shows that, by exchanging their order with the appropriate parameters, the two compositions also form an equivariant pair.

\begin{corollary}[Simultaneous screening equivariance]
\label{cor:simultaneous}
For every \(p=(f,g,A,a)\in\Gamma_{m,n}\), every
\(I\sqcup\bar I=\{1,\ldots,n\}\), \(z\in\mathbb R^{\bar I}\),
\(J\sqcup\bar J=\{1,\ldots,m\}\), and \(s\in\mathbb R^{\bar J}\),
whenever both compositions below are defined,
\[
\left(S^{S}_{J,s}\,S^{F}_{I,z}(p)\right)^*
=
S^{F}_{J,s}\,S^{S}_{I,z}(p^*),
\]
equivalently,
\[
S^{S}_{J,s}\,S^{F}_{I,z}(p)
=
\left(S^{F}_{J,s}\,S^{S}_{I,z}(p^*)\right)^*.
\]
\end{corollary}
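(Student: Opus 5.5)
The plan is to apply Theorem~\ref{thm:duality} twice, once to each screening in the composition, and then use the involution of Proposition~\ref{prop:involution} to pass to the equivalent form.

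Set \(q:=S^F_{I,z}(p)\in\Gamma_{m,|I|}\). The first step is to use the second identity of Theorem~\ref{thm:duality}, applied to the representation \(q\) rather than \(p\). This is legitimate because \(q\) is again a separable FR representation. Indeed, \(T^{\mathrm d}_{A_{\bar I}z}f\) is still separable, being a coordinatewise shift of \(f\); \(g_I\) is separable; and both are closed, proper and convex. Moreover \(J\sqcup\bar J=\{1,\ldots,m\}\) still indexes the rows of \(q\), since feature screening does not change the number of rows (Corollary~\ref{cor:complexity}). This gives
\[
\bigl(S^S_{J,s}(q)\bigr)^*=S^F_{J,s}(q^*).
\]

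The second step uses the first identity of Theorem~\ref{thm:duality} for \(p\), namely \(q^*=\bigl(S^F_{I,z}(p)\bigr)^*=S^S_{I,z}(p^*)\). Substituting this into the previous display yields
\[
\bigl(S^S_{J,s}S^F_{I,z}(p)\bigr)^*=S^F_{J,s}S^S_{I,z}(p^*),
\]
which is the first claimed identity. On the dual side, \(p^*\in\Gamma_{n,m}\) and \(S^S_{I,z}\) restricts its rows, which are indexed by \(\{1,\ldots,n\}\). The result \(S^S_{I,z}(p^*)\) lies in \(\Gamma_{|I|,m}\), whose columns are still indexed by \(\{1,\ldots,m\}\), so \(S^F_{J,s}\) is well defined. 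This matches the hypothesis ``whenever both compositions are defined.''

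For the equivalent form, apply \((\cdot)^*\) to both sides of the first identity. Then use \((r^*)^*=r\) from Proposition~\ref{prop:involution} with \(r=S^S_{J,s}S^F_{I,z}(p)\); this requires only that \(r\) be an FR representation, which holds by closure.

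The main obstacle is bookkeeping rather than analysis. One must check that the intermediate objects remain separable, closed, proper and convex, so that Theorem~\ref{thm:duality} can be reapplied. One must also check that the index sets \(J\) and \(I\) refer to the correct slots after dualization and screening: the row indices of \(q\) are the column indices of \(q^*\), and the column indices of \(q\) are the row indices of \(q^*\). A further subtlety is that the constant \(a+g_{\bar I}(z)\) must be finite for \(q\) to be admissible; if it is not, I would treat that case as excluded by the phrase ``whenever both compositions below are defined.''
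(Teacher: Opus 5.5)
Your proposal is correct and follows the paper's proof essentially verbatim. You set \(q:=S^F_{I,z}(p)\), apply the second identity of Theorem~\ref{thm:duality} to \(q\) and the first to \(p\), substitute, and pass to the equivalent form via the involution of Proposition~\ref{prop:involution}. Your additional bookkeeping is accurate and only makes explicit what the paper leaves implicit: separability and closedness of \(T^{\mathrm d}_{A_{\bar I}z}f\), the index slots after dualization, and finiteness of \(a+g_{\bar I}(z)\), which is covered by the ``whenever both compositions are defined'' clause.
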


\begin{proof}
Write \(q:=S^F_{I,z}(p)\). Applying Theorem~\ref{thm:duality} to \(q\) and \(p\), respectively, we obtain
\[
\left(S^S_{J,s}(q)\right)^*=S^F_{J,s}(q^*),
\qquad 
q^*=\left(S^F_{I,z}(p)\right)^*=S^S_{I,z}(p^*).
\]
Substituting,
\[
\left(S^S_{J,s}\,S^F_{I,z}(p)\right)^*
=
S^F_{J,s}\bigl(S^S_{I,z}(p^*)\bigr)
=
S^F_{J,s}\,S^S_{I,z}(p^*),
\]
which is the first identity. The second follows by applying
\((\cdot)^*\) to both sides and using the involution \((p^*)^*=p\)
(Proposition~\ref{prop:involution}).
\end{proof}
% \noteCLAUDE{Doc2 dependencies: only Proposition "Involution"
% (\((p^*)^*=p\)). Everything else comes from Theorem~\ref{thm:duality}
% (this section), applied twice.}\noteLE{Verified}

% This is the correct replacement for a Commutativity theorem. Feature
% screening and sample screening do not need to commute on the same
% representation \(p\): requiring that would identify two representations
% that, in general, differ by exactly the cross-block interaction
% between the dropped samples and the dropped features. Instead,
% composing on \(p\) and dualizing lands on the same object as composing
% on \(p^*\), with \(S^F\) and \(S^S\) exchanging roles. This is what
% "simultaneous screening" means in this calculus: not that the two
% operators commute, but that composing them is itself compatible with
% duality, exactly as a single screening is (Theorem~\ref{thm:duality}).
% This is a two-line corollary of Theorem~\ref{thm:duality}, applied
% twice; no new machinery is needed beyond it.

\section*{Conclusion}

This paper formalizes the duality between feature and sample screening. Our first contribution is to introduce Fenchel--Rockafellar (FR) representations, a class of convex problems on which screening and FR duality act as closed transformations. Our second contribution is to prove that feature and sample screening are equivariant under FR duality: feature screening followed by dualization is equivalent to dualization followed by sample screening, and conversely. Their duality is therefore established as a commutative relation between transformations rather than an informal, model-dependent correspondence.

The proof rests on two primitive transformations, translation and restriction, and the derived notions of feature and sample translation. We show that these transformations are equivariant under FR duality and that every feature or sample screening operator decomposes into a translation followed by a restriction. The equivariance of feature and sample screening then follows directly from these two structural results.

The framework has two main limitations. First, restriction relies on separability of the FR representation, capturing screening methods beyond this setting \citep{elvira2020safe,nguyen2026gap} requires a broader framework. Second, the present framework characterizes the algebraic duality of general screening, but not safe screening. Investigating the change of solution sets after screening is also an important direction toward a complete duality theory of safe screening.

% \pagebreak
\bibliographystyle{plainnat}
\bibliography{meta/refs}

\end{document}